\newtheorem{Teo}{Theorem}[section]
\newtheorem{Def}[Teo]{Definition}
\newtheorem{Prop}[Teo]{Proposition}
\newtheorem{Lema}[Teo]{Lemma}
\newtheorem{Cor}[Teo]{Corollary}
\newtheorem{Conj}[Teo]{Conjecture}
\begin{document}

\title{\textsc{On a Conjecture by Kauffman on Alternative and Pseudoalternating Links}}

\author{Marithania Silvero \footnote{Partially supported by MTM2010-19355, MTM2013-44233-P, P09-FQM-5112 and FEDER.}\\ \\
Departamento de Álgebra.
Facultad de Matemáticas. \\
Universidad de Sevilla.
Spain.\\
{\tt marithania@us.es}\\ \\
}

\maketitle


\noindent \textbf{Abstract} \,
It is known that alternative links are pseudoalternating. In 1983 Louis Kauffman conjectured that both classes are identical. In this paper we prove that Kauffman Conjecture holds for those links whose first Betti number is at most 2. However, it is not true in general when this value increases, as we also prove by finding two counterexamples: a link and a knot whose first Betti numbers equal 3 and 4, respectively.

\bigskip

\noindent \textbf{Keywords:} Alternative links. Homogeneous links. Pseudoalternating links.
\vspace{1.2cm}
\mbox{ }


\section{Introduction}

In \cite{LibroKauffman} Louis Kauffman defined the family of alternative links, as an extension of the class of alternating links which preserves some of the nice properties of this well-known class. Previously, in 1976, E.J. Mayland and K. Murasugi introduced the class of pseudoalternating links \cite{Pseudoalternantes}. Alternative links are pseudoalternating, and Kauffman conjectured that the converse also holds:

\begin{Conj} {\rm{\cite{LibroKauffman}}} \label{Conjetura}
The classes of alternative and pseudoalternating links are identical.
\end{Conj}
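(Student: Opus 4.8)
\bigskip
\noindent\textbf{Proof proposal.}\ The natural way to attack Conjecture~\ref{Conjetura} is to exploit that both classes are assembled, via Murasugi sums (plumbings), from the same elementary bricks: a pseudoalternating link bounds a \emph{flat surface} $F=F_1*\cdots*F_k$, where each $F_i$ is a \emph{primitive flat surface}, i.e.\ the Seifert surface produced by Seifert's algorithm on a connected special alternating diagram. Two facts bound the complexity of the pieces. First, the first Betti number is additive under plumbing, $b_1(F)=\sum_i b_1(F_i)$, and every primitive flat surface has $b_1(F_i)\ge 1$. Second, a special alternating diagram is reduced alternating, so its Seifert surface is of minimal genus (Crowell--Murasugi), and a Murasugi sum of minimal genus Seifert surfaces is again of minimal genus (Gabai); hence every pseudoalternating surface is of minimal genus, and if $L$ is pseudoalternating with $b_1(L)\le 2$ it bounds a flat surface $F$ with $b_1(F)=b_1(L)\le 2$. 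In particular $F$ is a plumbing of at most two primitive flat surfaces.

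From here the case $b_1\le 2$ follows from a short analysis which in fact shows more, namely that such an $L$ is already \emph{alternating}: if $F$ is primitive, $L$ is special alternating; and if $F=F_1*F_2$ with $b_1(F_1)=b_1(F_2)=1$, then each $F_i$ is a (positive or negative) Hopf band, so $L$ bounds a plumbing of two Hopf bands, and one checks directly that the only such links are the left and right trefoils, the figure-eight knot, and connected sums of two Hopf links --- all alternating. Since alternating links are alternative, Conjecture~\ref{Conjetura} holds whenever $b_1(L)\le 2$.

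The obstacle --- and, as the abstract already reveals, the reason the conjecture fails in general --- is hidden in the implication ``$L$ bounds a flat surface'' $\Rightarrow$ ``$L$ has an alternative, in particular homogeneous, diagram''. Knowing the abstract plumbing pattern of $F$ is weaker than producing a diagram whose Seifert graph carries the block structure of Kauffman's definition: the plumbing squares may be mutually linked or knotted, and with three or more bricks there is genuinely enough freedom for this to happen. So for $b_1\ge 3$ one should hunt for counterexamples --- a pseudoalternating link with $b_1=3$, and, since the first Betti number of a knot is even (so $b_1=3$ is impossible), a pseudoalternating knot with $b_1=4$ --- realised as carefully chosen iterated plumbings of Hopf bands. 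Exhibiting such examples is easy; the hard part is \emph{certifying} that they are not alternative. This cannot be done by inspection and must come from a computable obstruction that separates the two classes --- for instance features of the HOMFLY or Kauffman polynomial, or constraints on the genera realisable by homogeneous diagrams --- and pinning down and verifying that certificate is where the real work lies.
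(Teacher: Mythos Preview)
Your overall architecture matches the paper's: use additivity of $b_1$ under plumbing together with minimality of generalized flat surfaces (the paper's Proposition~\ref{propaux} and Lemma~\ref{cotasupprim}) to reduce the $b_1\le 2$ case to plumbings of at most two primitive pieces, and for $b_1\ge 3$ hunt for explicit counterexamples certified by a polynomial obstruction.

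The gap is in your $b_1\le 2$ argument. A primitive flat surface with $b_1=1$ is \emph{not} forced to be a Hopf band: it is what the paper calls a \emph{twisted Hopf band}, namely $k$ discs and $k$ bands arranged in a cycle for any $k\ge 2$, bounding a $(2,k)$ torus link. Hence the plumbing $F_1*F_2$ with $b_1(F_i)=1$ produces a far larger family than your short list (trefoils, figure-eight, sums of Hopf links), and your enumeration collapses; the stronger conclusion ``alternating'' is neither proved nor claimed in the paper. The paper's Lemma~\ref{lema} sidesteps enumeration entirely and proves something strictly stronger: for \emph{any} two primitive flat surfaces $S_1,S_2$ (no constraint on their $b_1$), one can overturn $S_2$ across the gluing disc so that the planar projection of $\partial(S_1*S_2)$ has only two spaces carrying edges, one from each $S_i$, hence is an alternative diagram. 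That geometric move is the key idea you are missing.

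On the counterexample side your instincts are right but vague where the paper is specific. The obstruction used is Cromwell's criterion (Theorem~\ref{Corol}): if $L$ is homogeneous and the leading coefficient of $\nabla(L)$ is $\pm 1$, then $c(L)\le 2\,\mathrm{maxdeg}\,\nabla(L)$. The explicit examples are $L9n18\{1\}$ (with $\nabla=z^3+4z$, so $9>6$) and $10_{145}$ (with $\nabla=z^4+5z^2+1$, so $10>8$); their pseudoalternating structure is exhibited by concrete Reidemeister sequences and surface isotopies rather than by abstract Hopf-band plumbings.
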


Although this conjecture was stated by Kauffman, Mayland and Murasugi posed a similar question in \cite{Pseudoalternantes}. In this paper we prove Conjecture \ref{Conjetura} for links having their first Betti number, $\beta$, smaller than 3 (this includes the class of knots of genus one), and we provide counterexamples for links whose first Betti numbers equal 3 and 4, respectively (recall that $\beta(L) = \min \{rank(H_1(F)) | \mbox{ F is a Seifert surface for L}\}$). Namely, we present a genus two knot and a genus one two-components link which are pseudoalternating but not alternative.

\vspace{0.2cm}

The plan of the paper is as follows. In Section 2 we recall the definitions of alternative and pseudoalternating links; we also recall the definition of homogeneous links, an intermediate family introduced by Peter Cromwell in \cite{CromwellHom}. In Section 3 we disprove Kauffman Conjecture by finding a link and a knot being pseudoalternating and non-homogeneous, hence non-alternative: $L9n18\{1\}$ and $10_{145}$, whose first Betti numbers equal 3 and 4, respectively. Finally, Section 4 is devoted to prove that the Conjecture holds in the case of links having first Betti number smaller than 3, providing an alternative proof of the characterization of homogeneous genus one knots given in \cite{Pedro}; we also give an upper bound for the number of primitive flat surfaces plumbed to construct a generalized one spanning a given pseudoalternating link.

\vspace{1.1cm}

\noindent \textbf{Acknowledgements} \,
I want to thank Pedro M. G. Manchón and Juan González-Meneses for telling me about Kauffman Conjecture, which constituted the starting point of this work. I am also grateful for their numerous valuable comments, their suggestions and corrections on preliminary versions of this paper. I would also like to thank Józef H. Przytycki for sharing some of his intuitive ideas with me, and to the anonymous referee for his/her many interesting suggestions on an earlier version of this paper.\\

\section{Alternative, Homogeneous and Pseudoalternating Links} \label{defalternpseudo}

As the alternative, homogeneous and pseudoalternating characters of a link are orientation dependant, from now on all links will be oriented and non-split.

\vspace{0.2cm}

Given an oriented diagram $D$ of a link $L$, it is possible to smooth every crossing coherently with the orientation of the diagram. After doing this for all crossings in $D$, we obtain a set of topological (Seifert) circles. Following Kauffman, the {\emph{spaces}} of the diagram $D$ are the connected components of the complement of its Seifert circles in $S^2$, as opposed to the regions of the knot diagram. Draw an edge joining two Seifert circles at the place where there was a crossing in $D$, and label the edge with the sign of the corresponding crossing ($+$ or $-$). We will refer to the resulting set of topological circles and labeled edges as the Seifert diagram of $D$, because of the analogy of this process to Seifert's algorithm for constructing an orientable surface spanning a link.

\begin{Def} {\rm{\cite{LibroKauffman}}}
An oriented diagram $D$ is alternative if all the edges in any given space of $D$ have the same sign. An oriented link is alternative if it admits an alternative diagram.
\end{Def}

An oriented diagram is alternating if and only if it is alternative and the sign of the edges in its Seifert diagram changes alternatively when passing through adjacent spaces \cite[Lemma 9.2]{LibroKauffman}. There are nevertheless alternative links which are not alternating: for instance, every positive (hence alternative) non-alternating link, like the knot $8_{19}$.

\vspace{0.2cm}

We consider now the family of homogeneous links, introduced by Peter Cromwell in 1989 \cite{CromwellHom}. From the Seifert diagram associated to $D$, we can construct a graph $G_D$ as follows: associate a vertex to each Seifert circle and draw an edge connecting two vertices in $G_D$ for each edge joining the associated circles in the Seifert diagram; each edge must be labeled with the sign $+$ or $-$ of its associated crossing in $D$. The signed graph $G_D$ is called the Seifert graph associated to $D$. Note that $G_D$ can be obtanined from the Seifert diagram of $D$ by collapsing each circle to a vertex.

\vspace{0.2cm}

Given a connected graph $G$, a vertex $v$ is a \emph{cut vertex} if $G \backslash \{v\}$ is disconnected. A block of $G$ is a maximal subgraph of $G$ with no cut vertices. Blocks of the graph $G$ can be thought of in the following way: remove all the cut vertices of $G$; each remaining connected component together with its adjacent cut vertices is a block of $G$.

\begin{Def} {\rm{\cite{CromwellHom}}}
A Seifert graph is homogeneous if all the edges of a block have the same sign, for all blocks in the graph. An oriented diagram $D$ is homogeneous if its associated Seifert graph $G_{D}$ is homogeneous. An oriented link is homogeneous if it admits a homogeneous diagram.
\end{Def}

\begin{figure}[t] \label{943}
\centering
\includegraphics[width = 10cm]{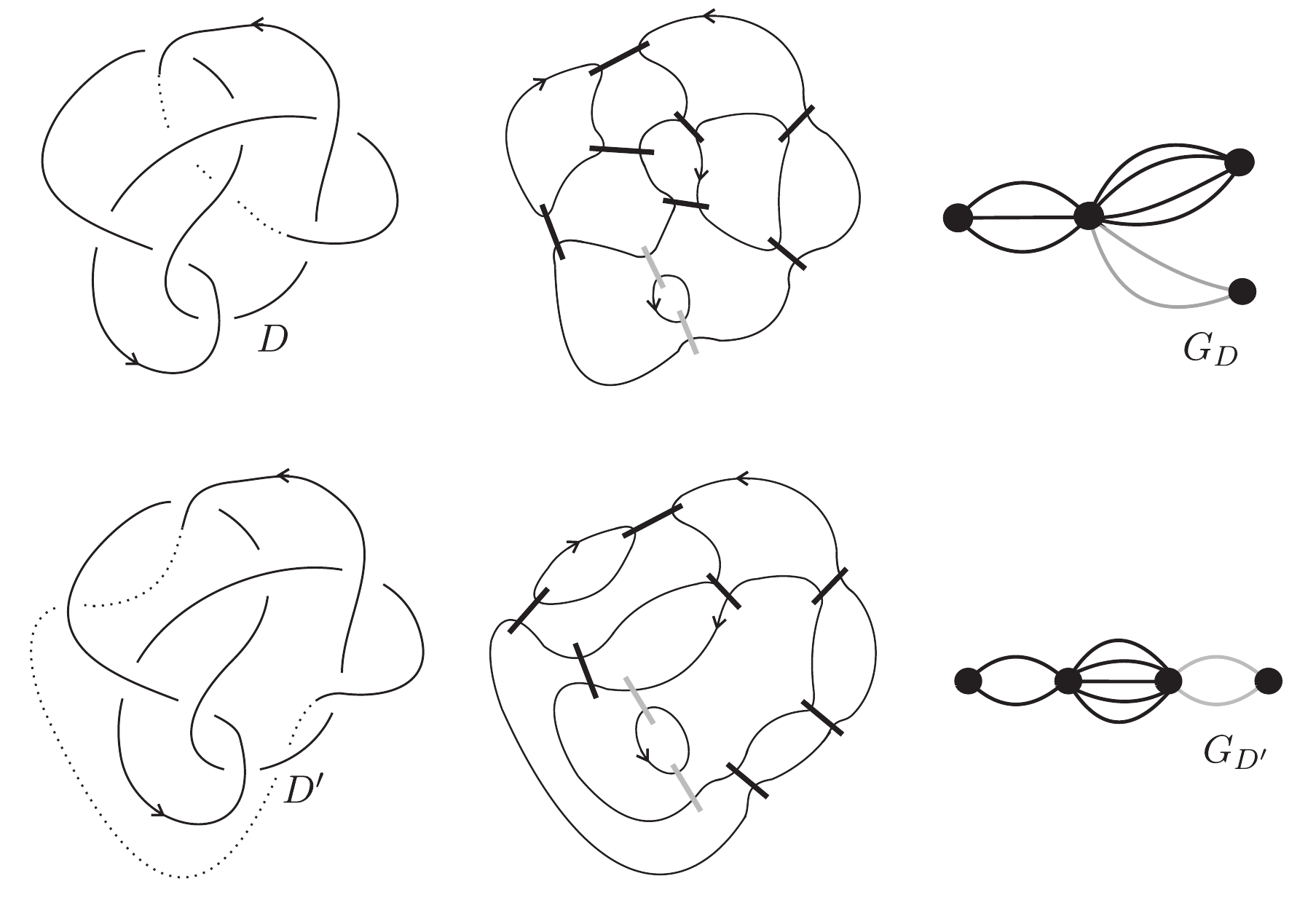}
\caption{\small{Two diagrams $D$ and $D'$ of the knot $9_{43}$, their associated Seifert diagrams and Seifert graphs. A dark edge has a positive label; a light edge a negative one. $D$ shows that $9_{43}$ is homogeneous, as the diagram in the figure is so; $D$ is non-alternative, but since $D'$ is so, $9_{43}$ is alternative.}}
\label{943}
\end{figure}

Note that the original diagram $D$ can be recovered from its Seifert diagram, as the sign and position of the crossings in the diagram are preserved (see Figure \ref{943}). However, as the relative position of the circles and the order of the edges is not encoded in the Seifert graph, $D$ cannot be recovered from $G_D$.

\vspace{0.2cm}

Let us finally introduce pseudoalternating links. Starting from an oriented diagram $D$ of a link $L$, the surface $S_D$ obtained by applying Seifert's algorithm \cite{LibroCromwell} is known as the canonical surface (called projection surface in \cite{LibroCromwell}) of $L$ associated to $D$. The graph $G_D$ can also be thought as the spine graph of the corresponding canonical surface.

\vspace{0.2cm}

{\emph{Primitive flat surfaces}} \cite{Pseudoalternantes} are those canonical surfaces arising from positive or negative diagrams whose Seifert diagrams have no nested circles. A {\emph{generalized flat surface}} is, roughly speaking, an orientable surface obtained by gluing a finite number of primitive flat surfaces along some of their discs.

\vspace{0.2cm}

More precisely, given two primitive flat surfaces $S_1$ and $S_2$, choose a disc of each one, $d_1$ and $d_2$. Now, identify both discs in such a way that there exists a sphere $S^2 \subset S^3$ separating $S_3$ into two non empty 3-balls $B_1$ and $B_2$ such that $S_i \subset B_i$ and $S^2 \cap S_i = d_i$, for $i = 1,2$. Bands starting at $d_1$ and $d_2$ are not allowed to overlap when identifying $d_1$ and $d_2$. This special kind of Stallings plumbing (or Murasugi sum) will be noted by $*$ (see Figure \ref{pegadopseudo}). Generalized flat surfaces are obtained as a finite iteration of this process, plumbing a primitive flat surface in each step.

\vspace{0.2cm}

The first Betti number of a surface $S$, $\beta(S)$, is the rank of its first homology group. For a primitive flat surface this is just the number of holes in the surface, or equivalently the number of connected components of the complement of its spine graph in the plane minus 1. As the Euler characteristic of one of these surfaces can be computed as its number of discs $d_S$ minus its number of bands $b_S$, it follows that $\beta(S) = b_S - d_S + 1$. Notice that the first Betti number is additive under this special kind of plumbing: each time one plumbs two surfaces both plumbing discs are identified, so the resulting spine graph can be thought as gluing the two original graphs along a vertex.

\vspace{0.2cm}

\begin{figure} \label{pegadopseudo}
\centering
\includegraphics[width = 12cm]{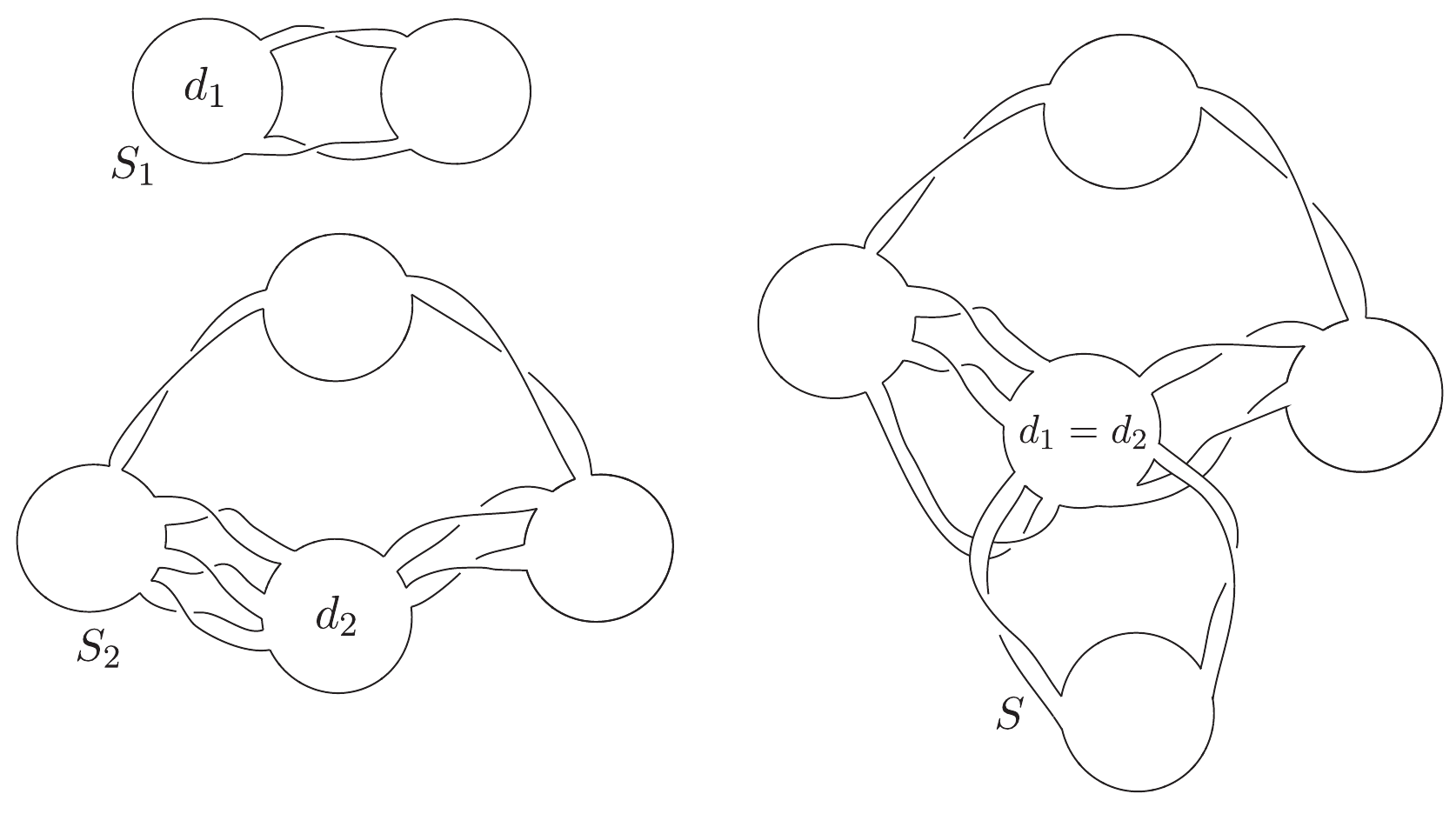}
\caption{\small{$S_1$ and $S_2$ are primitive flat surfaces, with $\beta(S_1) = 1$ and $\beta(S_2) = 4$. By an identification of discs $d_1$ and $d_2$ one obtains the generalized flat surface $S = S_1 * S_2$, having $\beta(S) = 5$. The link spanned by $S$ is a pseudoalternating link.}}
\end{figure}

As generalized flat surfaces are orientable, the following definition makes sense:

\begin{Def} {\rm{\cite{Pseudoalternantes}}}
An oriented link is said to be pseudoalternating if it is the boundary of a generalized flat surface, with the natural inherited orientation.
\end{Def}

The following result implies that the first Betti number of a pseudoalternating link $L$, $\beta(L)$, is given by any generalized flat surface spanning it:

\begin{Prop}\label{propaux}
Among all the connected Seifert surfaces that span a given pseudoalternating link, those being generalized flat surfaces have maximal Euler characteristic, or equivalently, minimal genus.
\end{Prop}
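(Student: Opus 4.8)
The plan is to combine two known facts about Seifert surfaces. First, \emph{Gabai's theorem that the Murasugi sum is a genus-minimizing operation}: a Murasugi sum $R_1 * R_2$ is a minimal genus Seifert surface for its boundary if and only if each of $R_1$ and $R_2$ is a minimal genus Seifert surface for its own boundary. Since a generalized flat surface spanning $L$ is, by definition, an iterated Murasugi sum $S = P_1 * P_2 * \cdots * P_k$ of primitive flat surfaces, induction on $k$ reduces the statement to the case $k=1$, i.e. to showing that \emph{every primitive flat surface is a minimal genus Seifert surface for the link it bounds}. Here one uses that $\chi$ is additive under $*$ up to the constant coming from the identified disc, $\chi(R_1 * R_2)=\chi(R_1)+\chi(R_2)-1$ (equivalently the additivity of $\beta$ already recorded before Proposition \ref{propaux}), so that ``minimal genus for each piece'' is exactly equivalent to ``maximal Euler characteristic for the sum''.

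For the base case, recall that a primitive flat surface $P$ is by definition the canonical surface $S_D$ associated to a diagram $D$ which is positive or negative. Then every edge of the Seifert graph $G_D$ carries the same sign, so trivially every block of $G_D$ is monochromatic and $D$ is a homogeneous diagram. By Cromwell's theorem on homogeneous links \cite{CromwellHom}, the canonical surface associated to a homogeneous diagram realises the genus of the link it spans; hence $\chi(P)$ is maximal among connected Seifert surfaces of $\partial P$. (For a positive diagram this also follows directly from the sharpness of the HOMFLY/Bennequin genus bound on positive diagrams, where Seifert's algorithm gives $\chi(S_D)=s(D)-c(D)$.) Combining this with the reduction of the previous paragraph proves the claim; and since a generalized flat surface is automatically connected while $L$ is non-split, comparing only with connected Seifert surfaces loses nothing.

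The main ``obstacle'' is really a choice of how much machinery to import: once Gabai's plumbing theorem and Cromwell's minimal-genus theorem for homogeneous diagrams are granted, everything else is bookkeeping with Euler characteristics. A route staying closer to the paper's own framework, and avoiding Gabai's theorem altogether, is to check that the iterated plumbing $P_1 * \cdots * P_k$ is induced at the diagram level by Murasugi summing the special positive/negative diagrams of the $P_i$: the Seifert graph of the resulting diagram is obtained by identifying the graphs $G_{P_i}$ at single vertices (the identified discs), so each of its blocks is contained in one $G_{P_i}$ and is therefore monochromatic, i.e. the diagram is homogeneous. Then $S$ is the canonical surface of a single homogeneous diagram of $L$, and one application of Cromwell's theorem finishes the proof. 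Either way, the only genuinely new ingredient is the elementary remark that iterated flat plumbing produces a homogeneous diagram.
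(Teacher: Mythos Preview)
Your main argument is correct and is essentially the route the paper points to: the paper does not prove the proposition itself but cites Mayland--Murasugi and Gabai, and your reduction via Gabai's plumbing theorem together with the base case (primitive flat surfaces are canonical surfaces of positive/negative diagrams, hence genus-minimizing) is exactly how one unpacks the Gabai reference.

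However, your proposed ``alternative route avoiding Gabai'' contains a genuine error, and it is worth understanding why, since it collides head-on with the main theorem of the paper. You claim that the iterated plumbing $P_1 * \cdots * P_k$ can always be realized at the diagram level, so that the resulting generalized flat surface is the canonical surface of some homogeneous diagram of $L$. If that were true, every pseudoalternating link would be homogeneous --- precisely what Theorems~\ref{Teoenlaces} and~\ref{Teonudos} refute. The gap is in the sentence ``the Seifert graph of the resulting diagram is obtained by identifying the graphs $G_{P_i}$ at single vertices'': the plumbing $*$ in the definition of generalized flat surfaces identifies an \emph{entire} disc of each primitive piece, and the bands of $P_i$ and $P_j$ attached to the common disc may be \emph{interleaved} in any cyclic order (the paper only requires that they do not overlap and that a separating sphere exists). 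When the bands genuinely interlock around the gluing disc, the resulting surface is not the projection surface of any planar diagram obtained by wedging the Seifert graphs at a vertex; indeed the surface $S'=S_1*S_2*S_3$ for $L9n18\{1\}$ in Figure~\ref{L9n181} is of exactly this type. So keep the Gabai argument and drop the alternative.
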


For a proof see {\cite[Proposition 4.4]{Pseudoalternantes}}. It also follows from a well-known result by Gabai \cite[Corollary 6.7]{Gabai}.

\vspace{0.2cm}

Given the Seifert diagram of a link, circles containing other circles in both sides (inside and outside) become cut vertices in the associated Seifert graph. As a result, an alternative diagram is homogeneous, so an alternative link is homogeneous.

\vspace{0.2cm}

Let $D$ be a homogeneous diagram and $S_D$ its projection surface. Let $G_1, \ldots, G_n$ be the blocks of the homogeneous Seifert graph $G_D$, $D_i$ the subdiagram of $D$ (together with some arcs) associated to the subgraph $G_i \subset G_D$, and $S_{D_i} \subset S_D$ the projection surface constructed from $D_i$. Then $S_D = S_{D_1} * \ldots * S_{D_n}$. Since blocks in a graph do not contain cut vertices, each $S_{D_i}$ is a primitive flat surface: all bands are twisted in the same way and the discs on the surface are either not nested, or there exists a single disc containing the other ones (and this situation is isotopic to the previous one). Hence, $S_D$ is a generalized flat surface. This proves that homogeneous links are pseudoalternating, so Conjecture \ref{Conjetura} can be restated by saying that the classes of alternative, homogeneous and pseudoalternating links are equal.

\vspace{0.2cm}

In the two following sections we will present two pseudoalternating links which are not homogeneous, hence non-alternative; these links are counterexamples to Conjecture \ref{Conjetura}.

\section{Two counterexamples to Kauffman Conjecture}

The main problem when trying to deal with Kauffman Conjecture is that identifying whether a link is alternative or pseudoalternating is not easy. Of course, by finding an alternative diagram one shows the alternativity of a link, but this does not help when the link is not alternative. In this sense, working with the family of homogeneous links will help us.

\vspace{0.2cm}

Cromwell's paper \cite{CromwellHom} is devoted to the study of homogeneous links; he provides some sufficient conditions for determining the non-homogeneous character of a link. In particular, we will use the following result:

\begin{Teo} {\rm{\cite[Corollary 5.1]{CromwellHom}}} \label{Corol}
If $L$ is a homogeneous link and the leading coefficient of its Conway polynomial $\nabla(L)$ is $\pm 1$, then the crossing number of $L$ is at most $2 \cdot \mbox{maxdeg } \nabla(L)$.
\end{Teo}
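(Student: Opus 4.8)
The idea is to reduce everything to the building blocks of a homogeneous diagram. Start from a reduced homogeneous diagram $D$ of $L$ (if $D$ has a nugatory crossing, remove it; the diagram stays homogeneous), with $c(D)$ crossings and $s(D)$ Seifert circles, and let $G_1,\dots,G_n$ be the blocks of $G_D$, so that $S_D=S_{D_1}*\cdots*S_{D_n}$ with each $S_{D_i}$ a primitive flat surface and each $L_i:=\partial S_{D_i}$ a special alternating link. Because the $*$-operation identifies whole plumbing discs with non-interleaving bands, it behaves like a generalized connected sum performed along a Seifert circle, and the Conway polynomial is multiplicative under it (as for connected sums); hence $\nabla(L)=\prod_{i=1}^n\nabla(L_i)$ up to sign. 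For a special alternating link $L_i$ the canonical surface $S_{D_i}$ has minimal genus and $\mbox{maxdeg}\,\nabla(L_i)=\beta(S_{D_i})$ (classical, Crowell and Murasugi), with leading coefficient $\lambda_i=\pm\det V_i$ for a Seifert matrix $V_i$ of $S_{D_i}$, a nonzero integer. Combining, $\mbox{maxdeg}\,\nabla(L)=\sum_i\beta(S_{D_i})=\beta(S_D)=c(D)-s(D)+1$, and the leading coefficient of $\nabla(L)$ equals $\pm\prod_i\lambda_i$.

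The crucial step is to use the hypothesis: if the leading coefficient of $\nabla(L)$ is $\pm1$, then $\prod_i|\lambda_i|=1$, hence $|\lambda_i|=1$ for every $i$; that is, the Alexander polynomial of each $L_i$ is monic on a minimal genus surface, so each $L_i$ is a fibered special alternating link. I would then show that a fibered special alternating link is a $(2,k)$-torus link, equivalently that the block $G_i$ attached to such a piece is a $k_i$-bond (two Seifert circles joined by $k_i$ parallel edges). Concretely, one writes a Seifert matrix of the primitive flat surface of $G_i$ in terms of a spanning tree of $G_i$ and argues, via an explicit count of spanning subgraphs for its determinant, that $|\det|=1$ forces $G_i$ to have only two vertices; alternatively one invokes the known classification of fibered special alternating links. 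Since $D$ is reduced, no block is a single edge, so each $k_i\ge2$; thus $D$ exhibits $L$ as a (generalized) connected sum $T(2,k_1)\#\cdots\#T(2,k_n)$, with $c(D)=\sum_i k_i$ and, because the bonds are glued along cut vertices forming a tree, $s(D)=n+1$.

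The arithmetic then closes the argument: $\mbox{maxdeg}\,\nabla(L)=c(D)-s(D)+1=\sum_i k_i-n$, and since every $k_i\ge2$ we have $\sum_i k_i\ge2n$, so $\mbox{maxdeg}\,\nabla(L)=\sum_i k_i-n\ge\tfrac12\sum_i k_i=\tfrac12 c(D)\ge\tfrac12 c(L)$, i.e. $c(L)\le2\,\mbox{maxdeg}\,\nabla(L)$. I expect the main obstacle to be the middle paragraph: controlling the leading coefficient of the Conway polynomial of a primitive flat surface -- equivalently, proving that the only fibered special alternating links are the $(2,k)$-torus links -- since this is the point where the combinatorics of the Seifert graph must be converted into a statement about a determinant. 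The multiplicativity of $\nabla$ under the $*$-operation and the degree formula $\mbox{maxdeg}\,\nabla=\beta$ for special alternating links (which together make up the first paragraph) are also essential inputs, but they are standard, and the final arithmetic is elementary.
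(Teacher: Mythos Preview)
The paper does not prove this theorem at all: it is quoted verbatim as Cromwell's Corollary~5.1 and used as a black box. So there is no ``paper's own proof'' to compare against; your proposal is effectively an attempted reconstruction of Cromwell's argument. The overall architecture you sketch --- decompose a reduced homogeneous diagram into blocks, use that the leading coefficient of $\nabla$ factors over the blocks, force each block to be a $(2,k)$-bond, and then count --- is indeed the route Cromwell takes.

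That said, one step as written is wrong. The Conway polynomial is \emph{not} multiplicative under the $*$-operation. Plumbing two positive Hopf bands gives a trefoil, and $\nabla(\text{Hopf})\cdot\nabla(\text{Hopf})=z\cdot z=z^{2}\neq z^{2}+1=\nabla(\text{trefoil})$. What is true, and what suffices for your argument, is that the Seifert matrix of a Murasugi sum is block--triangular with the $V_i$ on the diagonal, so that $\operatorname{maxdeg}\nabla(L)=\sum_i\operatorname{maxdeg}\nabla(L_i)$ and the leading coefficient of $\nabla(L)$ equals $\pm\prod_i\lambda_i$. You should replace the false multiplicativity claim by this weaker (but adequate) statement.

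The second paragraph is, as you yourself anticipate, the real content. The assertion that a fibred special alternating link arising from a single $2$-connected block must be a $(2,k)$-torus link is correct, but ``invoking the known classification'' is not a proof, and your determinant sketch is too vague to stand on its own. This step is exactly where Cromwell does the work, computing the leading coefficient of $\nabla$ for a block in terms of the combinatorics of the Seifert graph and showing it exceeds $1$ unless the block is a bond; without carrying that computation through, the proposal remains a plausible outline rather than a proof. Your final arithmetic (the counts $c(D)=\sum_i k_i$, $s(D)=n+1$, and the inequality $\sum k_i\ge 2n$) is fine.
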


The Conway polynomial of $L$ is defined as $\nabla(L) = P (1,z)$, where $P(v,z)$ is the \mbox{HOMFLYPT} polynomial defined by the skein relation $v^{-1} P (L_+) - vP(L_-) = z P(L_0)$, with normalization $P(\textnormal{unknot}) = 1$. Here $L_+$, $L_-$ and $L_0$ are links represented by diagrams $D_+$, $D_-$ and $D_0$ respectively, which only differ in the neighborhood of a point as shown in Figure \ref{suavizado}.

\begin{figure}[H] \label{suavizado}
\centering
\includegraphics[width = 5cm]{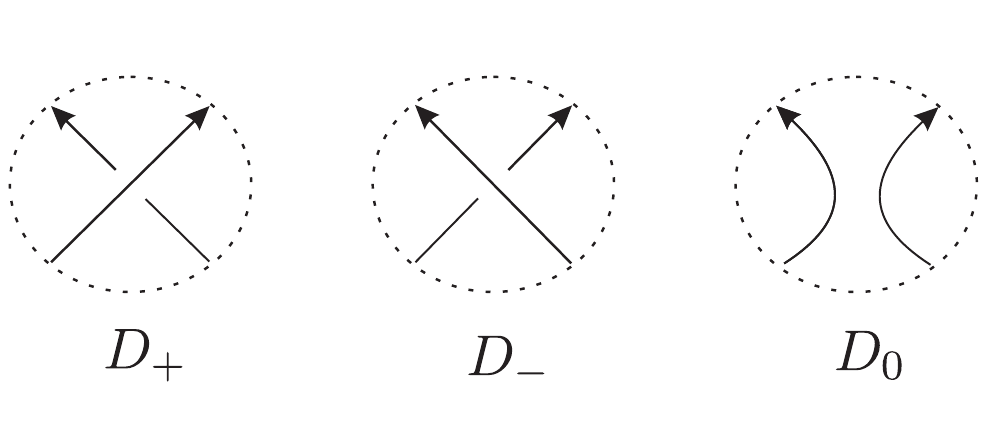}
\caption{\small{Diagrams $D_+$, $D_-$ and $D_0$.}}
\end{figure}

\vspace{0.2cm}

We are ready now to prove the following result:

\begin{Teo} \label{Teoenlaces}
There exists a pseudoalternating link which is not homogeneous.
\end{Teo}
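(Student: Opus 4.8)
The plan is to exhibit an explicit link, identify it as the promised counterexample $L9n18\{1\}$, and verify the two required properties: that it is pseudoalternating and that it is not homogeneous (hence, by the discussion in Section~\ref{defalternpseudo}, not alternative). The pseudoalternating side is constructive: I would present a concrete generalized flat surface built by plumbing a small collection of primitive flat surfaces along discs, following the $*$-operation described above, and check that its oriented boundary is the chosen link. Concretely, I expect to take one or two positive primitive flat surfaces and one negative primitive flat surface (so that the total link is genuinely non-alternating, ruling out the easy alternating-diagram argument), plumb them, and read off the boundary link; since the first Betti number is additive under $*$, I would arrange the pieces so that $\beta$ of the resulting surface is $3$, matching the claimed Betti number. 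The routine part here is just drawing the surface and checking the boundary is the named link; I would display this in a figure rather than grinding through an isotopy argument in the text.

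The non-homogeneity is the heart of the matter, and here I would invoke Theorem~\ref{Corol}. The strategy is: compute the Conway polynomial $\nabla(L)$ of the candidate link, check that its leading coefficient is $\pm 1$, and compute $\mathrm{maxdeg}\,\nabla(L)$; then observe that the crossing number $c(L)$ satisfies $c(L) > 2\cdot\mathrm{maxdeg}\,\nabla(L)$, so by the contrapositive of Theorem~\ref{Corol} the link cannot be homogeneous. For $L9n18\{1\}$ the crossing number is $9$ (it is a $9$-crossing non-alternating link in the tables, and minimality of that diagram is known), so I need $\mathrm{maxdeg}\,\nabla(L) \le 4$ together with leading coefficient $\pm 1$; in fact I would expect $\mathrm{maxdeg}\,\nabla(L)$ to be $2$ (consistent with $\beta = 3$ but with genus dropping in the Conway polynomial, as happens for links), which comfortably gives $9 > 4$. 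The Conway polynomial can be obtained by the skein relation from Figure~\ref{suavizado} resolving crossings of the diagram down to unlinks and simpler links, or quoted from standard tables; I would state the value and the skein computation briefly.

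The main obstacle I anticipate is not the homological bookkeeping but pinning down the crossing number of the candidate link rigorously — Theorem~\ref{Corol} only bites if we know $c(L)$ exceeds the bound, and lower bounds on crossing number are generally hard. Here I would lean on the fact that the counterexample is taken from the standard link tables, where $c(L9n18\{1\}) = 9$ is established (non-alternating $9$-crossing links are fully classified), so this reduces to a citation rather than a new argument. A secondary subtlety is making sure the orientation on the generalized flat surface's boundary really is the one for which the link is non-homogeneous — homogeneity is orientation-dependent, as emphasized above — so I would fix the orientation at the start and carry it consistently through both the surface construction and the Conway-polynomial computation. With $c(L) = 9$, leading coefficient $\pm 1$, and $\mathrm{maxdeg}\,\nabla(L)$ at most $4$, Theorem~\ref{Corol} forces non-homogeneity, and combined with the explicit generalized flat surface this proves the statement; the knot $10_{145}$ with $\beta = 4$ would then be handled by the same two-step recipe in the remainder of the section.
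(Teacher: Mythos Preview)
Your plan matches the paper's proof almost exactly: exhibit $L9n18\{1\}$, build an explicit generalized flat surface bounding it, and kill homogeneity with Theorem~\ref{Corol} via the Conway polynomial and the known crossing number $9$. Two of your speculative details are off, though neither breaks the argument: the paper's surface is a plumbing $S_1 * S_2 * S_3$ of \emph{three positive} Hopf-band annuli (no negative piece is needed --- all-positive plumbing can still yield a non-homogeneous, indeed non-positive, boundary link with this orientation), and $\nabla(L) = z^3 + 4z$, so $\mathrm{maxdeg}\,\nabla(L) = 3$, not $2$; the inequality $9 > 2\cdot 3$ is what you actually use.
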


\begin{proof}
Consider the oriented link $L$ with two components presented by the oriented diagram $D$ shown in Figure \ref{L9n181}. This link is $L9n18\{1\}$ in \cite{Linkinfo}, which corresponds to the two-components link $L9n18$ in Thistlethwaite table (or $9^2_{53}$ in Rolfsen table). Its Conway polynomial is $\nabla (L) = z^3 + 4z$, so by Theorem \ref{Corol}, if $L$ were homogeneous its crossing number would be at most $2 \cdot 3 = 6$, yielding a contradiction. Consequently, $L$ is non-homogeneous.

\begin{figure} \label{L9n181}
\centering
\includegraphics[width = 11.8cm]{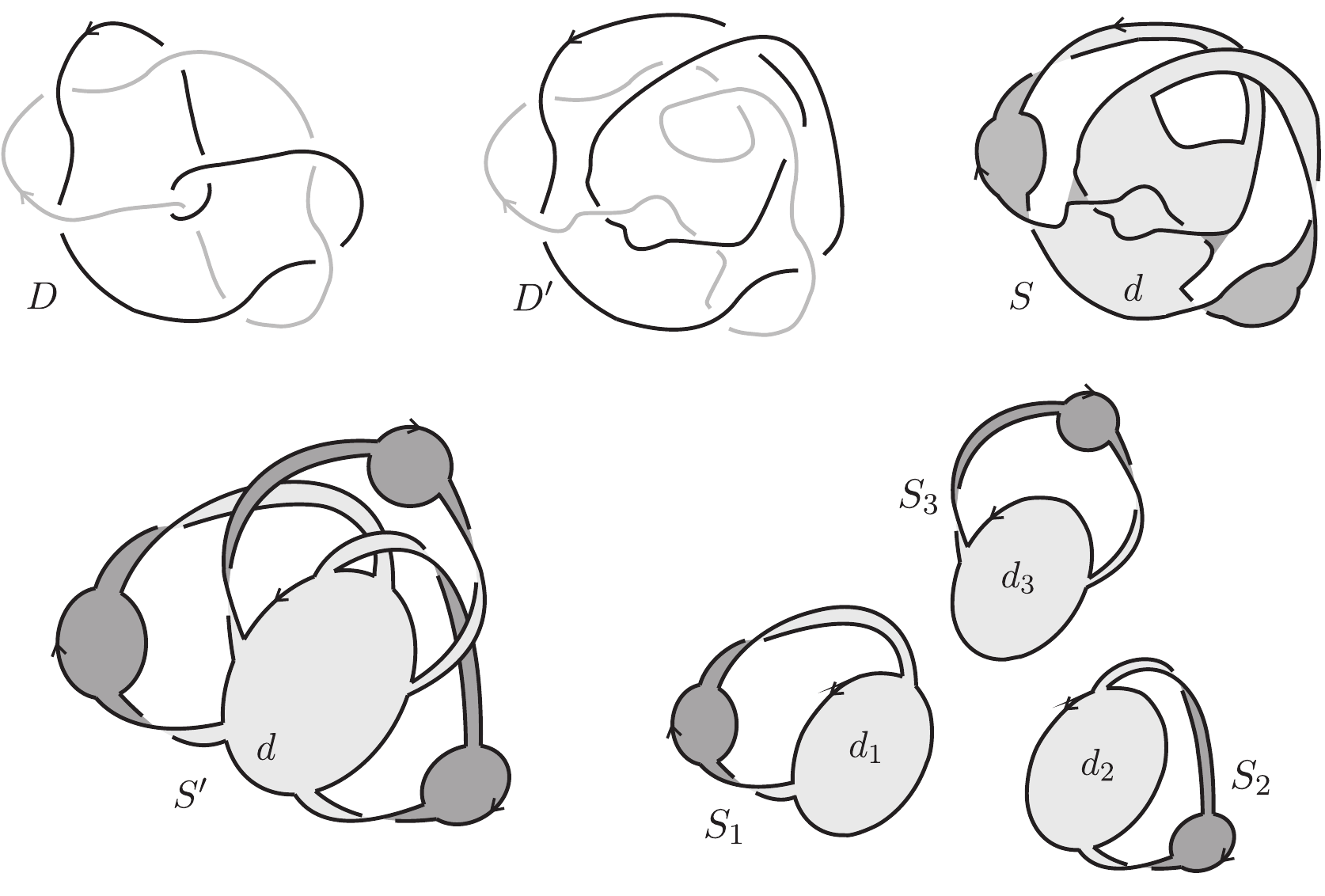}
\caption{\small{$D$ and $D'$ are diagrams of the link $L = L9n18\{1\}$ in \cite{Linkinfo}; $S$ is a Seifert surface for $L$, and $S'$ is obtained from $S$ just by overturning the subsurface $S_3$ over the disc $d$. Finally $S' = S_1*S_2*S_3$.}} \label{L9n181}
\end{figure}

\vspace{0.2cm}

See Figure \ref{L9n181}. The diagram $D$ can be transformed by a Reidemeister III and a Reidemeister I move into the diagram $D'$. The new diagram $D'$ allows us to see $L$ as the boundary of a certain surface $S$. The surface $S$ is clearly isotopic to $S'$, and $S'$ is the result of performing two Stallings plumbings of three surfaces, $S_1, S_2, S_3$, using discs $d_1$, $d_2$ and $d_3$ as ``gluing patches'', this is, $S = S_1 * S_2 * S_3$. Each of these surfaces consists on two discs joined by a pair of bands twisted in a positive way, so $S_1, S_2$ and $S_3$ are primitive flat surfaces; hence $S$ is a generalized flat surface and $L$ is a pseudoalternating link.
\end{proof}

Since every alternative link is homogeneous, the link $L9n18\{1\}$, with $\beta(L) = 3$, is a counterexample to Conjecture \ref{Conjetura}.

\begin{Cor}
There exists a pseudoalternating link which is not alternative.
\end{Cor}

Note that if one changes the orientation of one component of $L$ (for example, reversing the orientation of the dark component in Figure \ref{L9n181}) the resulting link is negative, hence alternative.

\vspace{0.2cm}

At this point one can wonder if there exist knots or links which are pseudoalternating and non-alternative with any possible orientation of its components. We will show such an example by finding a knot of genus two with these properties, as pseudoalternating, homogeneous and alternative characters are not orientation-dependant in the case of knots. This knot, whose first Betti number equals 4, is pseudoalternating and non-alternative. It would be interesting to find a link with more than one component being a counterexample to the conjecture with all possible orientations.

\begin{Teo} \label{Teonudos}
There exists a pseudoalternating knot which is not homogeneous.
\end{Teo}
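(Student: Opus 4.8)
The plan is to exhibit a concrete knot, namely $10_{145}$, as in the statement preceding Theorem~\ref{Teonudos}, and to run the same two-pronged argument used for the link $L9n18\{1\}$ in Theorem~\ref{Teoenlaces}: first show that $10_{145}$ is \emph{not} homogeneous by applying Cromwell's obstruction (Theorem~\ref{Corol}), and then show that it \emph{is} pseudoalternating by displaying an explicit generalized flat surface spanning it. Since pseudoalternating, homogeneous and alternative characters are orientation-independent for knots, no attention to orientation is needed beyond fixing one.

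For the non-homogeneity half, I would recall that $10_{145}$ has crossing number $10$ (it is alternating-table-indexed with $10$ crossings, and $10$ is its minimal crossing number), and compute its Conway polynomial, which is $\nabla(10_{145}) = 1 - z^2 + z^4$ up to sign conventions; in particular its leading coefficient is $\pm 1$ and $\operatorname{maxdeg}\nabla = 4$. If $10_{145}$ were homogeneous, Theorem~\ref{Corol} would force its crossing number to be at most $2 \cdot 4 = 8$, contradicting the fact that it is $10$. Hence $10_{145}$ is non-homogeneous, and therefore non-alternative (alternative links are homogeneous).

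For the pseudoalternating half, I would start from a convenient diagram of $10_{145}$ — e.g.\ the presentation as a pretzel-type or Montesinos-type knot, or more directly from its known description $P(3,-3,2)$-style plumbing — and, as in Figure~\ref{L9n181}, apply a short sequence of Reidemeister moves (and an overturn of a subsurface across a plumbing disc, exactly the trick used for $S'$ there) to realize a Seifert surface for $10_{145}$ as an iterated Stallings plumbing $S = S_1 * \cdots * S_k$ of primitive flat surfaces. Each $S_i$ must be checked to consist of discs joined by bands all twisted the same way, with no genuinely nested discs, so that each is a primitive flat surface; then $S$ is a generalized flat surface, its boundary is $10_{145}$ (with the inherited orientation), and hence $10_{145}$ is pseudoalternating. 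Since the genus of $10_{145}$ is $2$, we have $\beta = 4$, consistent with plumbing the appropriate number of bands.

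The main obstacle is the pseudoalternating half: finding the right diagram and the right sequence of isotopies so that the Seifert surface visibly decomposes into primitive flat pieces is not automatic — one has to massage the diagram (using Reidemeister~III moves and subsurface overturns across plumbing discs) until every band in every block is twisted coherently and no essential nesting of discs remains. Verifying that the plumbed surface indeed has boundary $10_{145}$ and minimal genus (so that $\beta = 4$) is routine given Proposition~\ref{propaux}, but the surface-hunting step requires the explicit picture that would accompany this theorem.
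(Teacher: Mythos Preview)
Your proposal is correct and follows exactly the paper's approach: apply Cromwell's obstruction (Theorem~\ref{Corol}) to rule out homogeneity, then exhibit an explicit generalized flat surface via a short Reidemeister sequence and an overturn across a plumbing disc. One small correction: the paper records $\nabla(10_{145}) = z^4 + 5z^2 + 1$ rather than $1 - z^2 + z^4$ (the discrepancy is not a sign convention), though this does not affect the argument since only the leading coefficient and the degree enter; the paper's surface decomposes as a plumbing of three primitive flat pieces---two annuli with $\beta=1$ and one piece with $\beta=2$, all glued along a single disc---which is precisely the kind of decomposition you anticipated.
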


\begin{proof}
The proof is analogous to that of Theorem~\ref{Teoenlaces}. Let $K$ be the genus two knot $10_{145}$ in Rolfsen table, with the orientation given in Figure \ref{10145}. Its Conway polynomial is $\nabla (K) = z^4 + 5z^2 + 1$, so by Theorem \ref{Corol} we deduce that $K$ is not homogeneous, as $10 > 2 \cdot 4$.

\vspace{0.2cm}

\begin{figure}[h] \label{10145}
\centering
\includegraphics[width = 12.1cm]{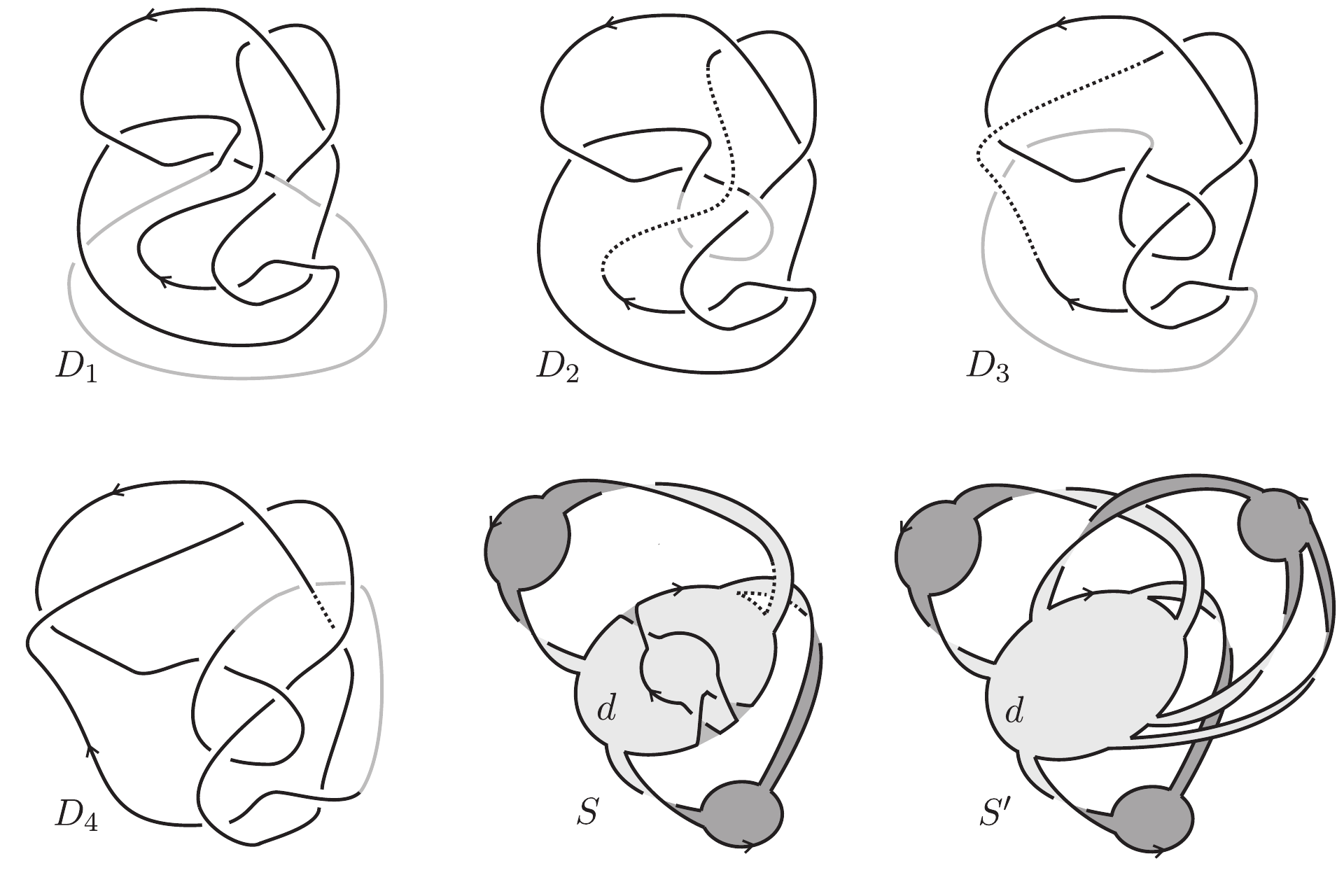}
\caption{\small{From $D_1$ to $D_2$ and from $D_3$ to $D_4$ just move the grey strand, leaving unchanged the rest of the diagram; from $D_2$ to $D_3$ perform two Reidemeister III moves on the dotted strand. By performing a Reidemeister I move on the dotted strand in $D_4$, we see that $S$ is a Seifert surface for $K$; $S'$ is the result of overturning one of the primitive flat surfaces over the disc $d$.}} \label{10145}
\end{figure}

See Figure \ref{10145}. By a finite sequence of Reidemeister moves, the classical diagram representing $K$ in \cite{Knotinfo}, $D_1$, can be transformed into $D_4$ by performing the following steps: from $D_1$, we obtain $D_2$ by leaving unchanged the diagram except for the grey strand; perform two Reidemeister III moves on the dotted strand of $D_2$ in order to get $D_3$; finally, transform $D_3$ into $D_4$ by moving the grey strand. At this point it is easy to see that the surface $S$ bounds $K$; $S$ is a generalized flat surface obtained by performing two Stallings plumbing of three primitive flat surfaces (two of them consist on a pair of discs joined by two bands, and the other one consists on two discs together with three bands, as shown in $S'$) using twice the same disc, $d$, as ``gluing patch''. As a result, $K$ is a pseudoalternating knot.
\end{proof}

\begin{Cor} \label{nudopseudonoalt}
There exists a pseudoalternating knot which is not alternative.
\end{Cor}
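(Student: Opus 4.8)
The plan is to obtain the corollary as an immediate consequence of Theorem~\ref{Teonudos}, using the implication ``alternative $\Rightarrow$ homogeneous'' already recorded in Section~\ref{defalternpseudo}. First I would recall why that implication holds: in the Seifert diagram of an alternative diagram, any circle nesting other circles both on its inside and on its outside collapses to a cut vertex of the associated Seifert graph, so every block of that graph is contained in a single space of the diagram; by the definition of an alternative diagram all edges in a given space carry the same sign, hence so do all edges of each block, i.e. the diagram is homogeneous. Therefore an alternative link is homogeneous, and contrapositively a non-homogeneous link is non-alternative.

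Then the corollary follows directly. Theorem~\ref{Teonudos} exhibits the genus two knot $10_{145}$ and shows it is pseudoalternating --- by transforming its standard diagram through the Reidemeister moves of Figure~\ref{10145} into a diagram bounding a generalized flat surface $S$, realized as a Stallings plumbing of three primitive flat surfaces along a common disc $d$ --- and, at the same time, non-homogeneous, via Cromwell's Theorem~\ref{Corol} applied to $\nabla(10_{145}) = z^4 + 5z^2 + 1$: its leading coefficient is $\pm 1$ and its maximal degree is $4$, whereas the crossing number $10$ exceeds $2\cdot 4 = 8$, contradicting homogeneity. Since non-homogeneous forces non-alternative, $10_{145}$ is the desired pseudoalternating, non-alternative knot.

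At the level of the corollary there is no genuine obstacle; the substantive content lies entirely inside Theorem~\ref{Teonudos}, whose delicate point is verifying that the indicated Reidemeister moves actually produce a diagram whose Seifert surface decomposes as the claimed $*$-plumbing --- in particular that each of the three pieces (two copies of two discs joined by a positively twisted pair of bands, and one copy of two discs joined by three bands) is genuinely a primitive flat surface, and that the bands meeting the gluing disc $d$ do not overlap in a way forbidden by the definition of the plumbing $*$. I would also emphasize that, for a knot, the pseudoalternating, homogeneous and alternative characters do not depend on the chosen orientation, so $10_{145}$ is a counterexample to Conjecture~\ref{Conjetura} with every orientation of its strand, and that $\beta(10_{145}) = 4$, complementing the two-component example of Theorem~\ref{Teoenlaces} with $\beta = 3$.
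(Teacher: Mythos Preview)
Your proposal is correct and follows exactly the paper's approach: the corollary is an immediate consequence of Theorem~\ref{Teonudos} together with the implication ``alternative $\Rightarrow$ homogeneous'' established in Section~\ref{defalternpseudo}. The additional remarks you make about orientation-independence for knots and about $\beta(10_{145})=4$ are likewise present in the paper, just after the corollary rather than inside its proof.
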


Since $10_{145}$ is a non-alternating, non-positive knot of genus two, the following result by Stoimenov provides an alternative proof of Corollary \ref{nudopseudonoalt}. We remark that the definition of homogeneous link given by Stoimenov in \cite{Stoimenov} is equivalent to our definition of alternative link, taken from Kauffman \cite{LibroKauffman}.

\begin{Teo} {\rm{\cite[Theorem 4.1]{Stoimenov}}}
Any alternative genus two knot $K$ is alternating or positive.
\end{Teo}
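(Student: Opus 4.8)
\, The plan is to take an alternative diagram of $K$ realizing the genus, read off the Murasugi--sum decomposition of its canonical surface, and exploit the fact that the total first Betti number is only $4$ to reduce to a short list of combinatorial configurations. Let $D$ be an alternative diagram of $K$; being alternative it is homogeneous, so by Cromwell's theorem \cite{CromwellHom} its canonical surface $S_D$ has minimal genus, whence $\beta(S_D)=2g(K)=4$ and the cycle rank of the Seifert graph $G_D$ equals $4$. After removing nugatory crossings we may assume $D$ reduced; and if $G_D$ has a bridge---equivalently, if $K$ is a non-trivial connected sum---we dispose of that case separately, since the summands are then alternative knots of genus $0$ or $1$, a genus-one alternative knot is alternating (the genus-one situation, cf.\ \cite{Pedro}), and a connected sum of alternating knots is alternating. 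So we may assume $G_D$ is bridgeless, and, by homogeneity, that all edges inside each block of $G_D$ carry a common sign.

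Writing $G_1,\dots,G_n$ for the blocks of $G_D$, the canonical surface is a Stallings plumbing $S_D=S_{D_1}*\cdots*S_{D_n}$ of the primitive flat surfaces associated to the blocks, with $\beta(S_{D_1})+\cdots+\beta(S_{D_n})=4$ and every $\beta(S_{D_i})\ge 1$; so the block--cut-vertex tree of $G_D$ carries at most four cycle-bearing blocks and the multiset $\{\beta(S_{D_i})\}$ is one of $\{4\}$, $\{3,1\}$, $\{2,2\}$, $\{2,1,1\}$, $\{1,1,1,1\}$. If all blocks carry the same sign, then every crossing of $D$ has that sign, so $D$ is a positive or a negative diagram and $K$ (or its mirror) is positive, and we are done. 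Otherwise $G_D$ has both a positive and a negative block, and we must prove that $K$ is alternating. Here the genus hypothesis is essential: in genus at least three there are alternative, non-positive, non-alternating knots (certain pretzel knots), so the argument must genuinely use that only four units of first Betti number can be split between blocks of opposite sign.

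In the mixed case the idea is to normalize the primitive flat summands---each is a positive or negative special piece, which can be flyped, without disturbing the alternative structure of the rest of $D$, into a form that is alternating as a diagram---and then to reassemble the pieces: one plumbs a positively-signed piece to a negatively-signed one along a disc, in the cyclic order dictated by $G_D$, so that the sign of the edges alternates as one passes between adjacent spaces. Since the spaces of a diagram, with adjacency across Seifert circles, form a tree, Kauffman's criterion \cite[Lemma 9.2]{LibroKauffman} (an alternative diagram is alternating exactly when the sign alternates between adjacent spaces) then shows the reassembled diagram is alternating, hence so is $K$. The step I expect to be the main obstacle is precisely this reassembly for the partitions with several small blocks, $\{2,1,1\}$ and $\{1,1,1,1\}$, where there is real freedom in which disc of which Hopf-type piece is glued to which and in the cyclic order of the bands around a cut-vertex disc: one has to rule out every gluing that remains alternative but stubbornly non-alternating, and, because there are infinitely many alternative genus-two knots, this must be argued uniformly within each combinatorial type rather than by a finite check. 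Two bookkeeping points also need care---that the reduction to $\sum\beta(S_{D_i})=4$ with all summands non-trivial is legitimate (handling nugatory crossings and connected sums), and that the normalizing flypes preserve the property that all edges of a space share a sign, so Kauffman's criterion still applies at the end. (An alternative route would bypass the combinatorics, separating the two classes via the sharp genus and crossing-number estimates available for positive and for alternating knots---for instance through the HOMFLYPT polynomial, in the spirit of Theorem~\ref{Corol}.)
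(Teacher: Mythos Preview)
The paper does not prove this theorem; it is quoted verbatim from \cite{Stoimenov} (with the terminological caveat, noted just above the statement, that Stoimenov's ``homogeneous'' coincides with Kauffman's ``alternative'') and is invoked only to give a second route to Corollary~\ref{nudopseudonoalt}. There is therefore no proof in this paper to compare your sketch against.

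As for the sketch itself: the reduction via Cromwell's theorem to a block decomposition of the Seifert graph with total cycle rank $4$, and the resulting short list of partitions, is sound and is a natural line of attack. Two points deserve flagging. First, in the all-same-sign case you obtain ``$K$ or its mirror is positive'', which is strictly weaker than the stated conclusion ``alternating or positive''; a negative, non-alternating, alternative genus-two knot (the mirror of any positive such knot) would satisfy your conclusion but not the theorem's, so either the statement is to be read up to mirroring or an extra argument is needed. Second, and more seriously, the mixed-sign ``reassembly'' step---flyping each primitive piece to alternating form and then re-plumbing so that signs alternate between adjacent spaces---is asserted rather than carried out, and you are right to identify the $\{2,1,1\}$ and $\{1,1,1,1\}$ cases as the crux: there one must control the cyclic order of bands at a cut-vertex disc and rule out alternative-but-non-alternating gluings uniformly, not by a finite check. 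Until that case analysis is actually performed, this remains an outline rather than a proof.
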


As we said before, alternativity, homogeneity and pseudoalternation depend on the orientation in the case of links but not when working with knots. Moreover, a knot is alternative, homogeneous or pseudoalternating if and only if its mirror image is so. Since $K = 10_{145}$ oriented as in Figure \ref{10145} is chiral, its mirror image $K^*$ is another counterexample to Conjecture \ref{Conjetura}.

\section{The case $\beta(L) \leq 2$}

Theorems \ref{Teoenlaces} and \ref{Teonudos} show that Conjecture \ref{Conjetura} does not hold for links or knots in general. On the contrary, in this section we prove that all links whose first Betti number is smaller than 3 satisfy Kauffman Conjecture (including the particular case of knots of genus one).

\vspace{0.2cm}

Given a pseudoalternating link $L$, it is not easy to find a diagram which helps to find a generalized flat surface bounding the link. Here we give an upper bound for the number of non-trivial (that is, non-isotopic to a disc) primitive flat surfaces which can be plumbed in order to get a generalized flat surface bounded by $L$.

\begin{Lema} \label{cotasupprim}
Let $S$ be a generalized flat surface spanning a pseudoalternating link $L$ with first Betti number $\beta(L)$. If $S = S_1 * S_2 * \ldots * S_n$, with each $S_i$ a non-trivial primitive flat surface, then $n \leq \beta(L)$.
\end{Lema}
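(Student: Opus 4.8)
The plan is to exploit the additivity of the first Betti number under the plumbing operation $*$, combined with the fact (stated after the definition of primitive flat surfaces) that a non-trivial primitive flat surface has first Betti number at least $1$. First I would recall that, as explained in the excerpt, each time one performs the plumbing $S' * S''$ the two gluing discs are identified, so the resulting spine graph is the wedge of the two original spine graphs at a vertex; since the first Betti number of a surface equals the first Betti number of its spine graph, we get $\beta(S' * S'') = \beta(S') + \beta(S'')$. Iterating this over the decomposition $S = S_1 * S_2 * \cdots * S_n$ yields $\beta(S) = \sum_{i=1}^n \beta(S_i)$.

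Next I would argue that each non-trivial primitive flat surface $S_i$ satisfies $\beta(S_i) \geq 1$. Indeed, a primitive flat surface arises from a positive or negative diagram with no nested Seifert circles, and its first Betti number is $b_{S_i} - d_{S_i} + 1$, the number of holes; it equals $0$ precisely when the surface is a disc (no bands), which is exactly the trivial case excluded by hypothesis. Hence $\beta(S_i) \geq 1$ for every $i$, and combining with the additivity computed above gives
\[
\beta(S) = \sum_{i=1}^n \beta(S_i) \geq n.
\]

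Finally, I would invoke Proposition~\ref{propaux}: among all connected Seifert surfaces spanning the pseudoalternating link $L$, the generalized flat surfaces have minimal genus, equivalently maximal Euler characteristic; since $\beta$ of a connected Seifert surface is $1 - \chi$ up to the orientability conventions, this means $\beta(S) = \beta(L)$ whenever $S$ is a generalized flat surface spanning $L$. Putting this together with $\beta(S) \geq n$ gives $n \leq \beta(L)$, as claimed.

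The only real subtlety — and thus the step to state carefully rather than the genuine obstacle — is making precise why the additivity argument is valid when the same disc $d$ is reused as a gluing patch in several plumbing steps (as happens in both counterexamples of Section 3); but this is harmless because each individual plumbing step still identifies exactly one disc of the newly attached primitive surface with a disc already present, so the spine graph still grows by a wedge at a single vertex and the Betti numbers still add. I expect no serious difficulty here; the lemma is essentially a bookkeeping consequence of additivity of $\beta$ together with Proposition~\ref{propaux}.
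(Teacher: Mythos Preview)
Your proposal is correct and follows essentially the same argument as the paper: use Proposition~\ref{propaux} to get $\beta(S)=\beta(L)$, note $\beta(S_i)\geq 1$ for each non-trivial primitive flat surface, and apply additivity of $\beta$ under the plumbing $*$ to obtain $\beta(S)=\sum_i \beta(S_i)\geq n$. The paper's proof is more terse, but the logical skeleton is identical; your additional remarks on why additivity survives repeated use of the same gluing disc are a welcome elaboration rather than a departure.
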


\begin{proof}
As $S$ is a generalized flat surface, $\beta(S) = \beta(L)$, by Proposition \ref{propaux}. As the surface $S_i$ is connected and non-trivial, $\beta(S_i) \geq 1$. Then, after performing $n$ Stallings plumbings, one gets $\beta(S_1*S_2* \ldots * S_n) = \beta(S_1) + \beta(S_2) + \ldots + \beta(S_n) \geq n$.

\end{proof}

\begin{Lema} \label{lema}
Let $S$ be a generalized flat surface spanning a pseudoalternating link $L$. If $S$ is either a primitive flat surface or a generalized flat surface constructed as the Stallings plumbing of two primitive flat surfaces, then $L$ is alternative.
\end{Lema}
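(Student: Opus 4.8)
The plan is to produce an alternative diagram of $L$ explicitly. The basic remark is that a positive or negative diagram is automatically alternative: all of its crossings carry one and the same sign, so \emph{a fortiori} all the edges in any given space carry that sign. Since a primitive flat surface is, by definition, the canonical surface $S_D$ of such a diagram $D$, this already settles the case in which $S$ is itself a primitive flat surface, because then $L = \partial S$ is presented by the alternative diagram $D$. The same remark covers the case $S = S_1 * S_2$ in which one of the two primitive flat surfaces is isotopic to a disc: the Stallings plumbing then returns the other surface up to isotopy, and we are back in the previous situation.

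It remains to treat $S = S_1 * S_2$ with $S_1$ and $S_2$ both non-trivial. Fix positive or negative diagrams $D_1$ and $D_2$ with $S_{D_i} = S_i$, and let $\varepsilon_i \in \{+,-\}$ denote the common sign of the crossings of $D_i$. The key step is to reconstruct, from the plumbing data, a diagram of $L$. Recall that the plumbing identifies a disc $d_1 \subset S_1$ with a disc $d_2 \subset S_2$ in such a way that some sphere in $S^3$ separates the two pieces and the bands incident to $d_1$ do not overlap the bands incident to $d_2$ along the common disc $d$. Projecting this configuration to the plane produces a diagram $D$ of $L = \partial S$ with $S_D = S$, whose Seifert diagram is obtained from those of $D_1$ and $D_2$ by merging the Seifert circle bounding $d_1$ with the Seifert circle bounding $d_2$ into a single Seifert circle $C$; the non-overlap hypothesis is precisely what allows us to arrange matters so that the spaces of $D$ lying inside $C$ are exactly the spaces of $D_1$ and the spaces of $D$ lying outside $C$ are exactly the spaces of $D_2$. (Equivalently, the Seifert graph $G_D$ is $G_{D_1}$ and $G_{D_2}$ glued along the single vertex corresponding to $C$.)

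Granted this description of $D$, the conclusion is immediate: a space of $D$ is a connected component of the complement of the Seifert circles of $D$ in $S^2$, hence it is disjoint from $C$ and therefore lies entirely inside or entirely outside $C$. In the former case it is a space of $D_1$, so all its edges have sign $\varepsilon_1$; in the latter case it is a space of $D_2$, so all its edges have sign $\varepsilon_2$ (a space incident to no crossing having no edges at all). Thus every space of $D$ has monochromatic edges, $D$ is an alternative diagram, and $L$ is an alternative link. If $\varepsilon_1 = \varepsilon_2$ the argument goes through unchanged, and $D$ is in fact a positive or negative diagram.

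I expect the one genuinely delicate point to be the reconstruction in the second paragraph: one must check, from the precise definition of the Stallings plumbing together with the non-overlap condition on the bands, that $S_1 * S_2$ really is the canonical surface of a diagram in which a single Seifert circle encloses the whole of one primitive flat piece and excludes the whole of the other. Once that combinatorial picture of the Seifert diagram of $D$ is in hand, the rest is the one-line sign argument above.
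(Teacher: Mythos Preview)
Your proposal is correct and follows essentially the same approach as the paper: manufacture a diagram in which a single Seifert circle $C$ separates the $S_1$-edges from the $S_2$-edges, so that every space is monochromatic and the diagram is alternative. The paper fills in precisely the step you flag as delicate by explicitly ``overturning'' $S_2$ over (or under) the gluing disc $d$; after this isotopy the projection to the plane containing the discs of $S_1$ gives a diagram with exactly two spaces containing edges---one for each primitive piece---which is the concrete realization of the nested Seifert picture you describe.
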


\begin{proof}
If $S$ is a primitive flat surface, then $L$ is a positive or negative link, hence it is alternative.

\vspace{0.2cm}

Otherwise $S = S_1 * S_2$, with $S_1$ and $S_2$ two primitive flat surfaces. Now just turn $S_2$ over (or under) the ``gluing disc'' (see Figure \ref{volcados} for an example of a plumbing of two annuli; notice that any primitive flat surface could be turned over in the same way). Then it is clear that the projection of the boundary of the new surface on the plane that contains the discs of $S_1$ provides a diagram for $L$ with just two spaces containing edges; this diagram is alternative, as edges related to $S_1$ and $S_2$ are in different spaces.
\end{proof}

\begin{Cor}\label{beta2}
Every pseudoalternating link $L$ with $\beta(L) \leq 2$ is alternative.
\end{Cor}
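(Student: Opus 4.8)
The plan is to reduce, via the two preceding lemmas, to the cases $n = 1$ and $n = 2$ primitive flat pieces, after first discarding the trivial ones. Since $L$ is pseudoalternating it bounds a generalized flat surface $S$, which by definition is a finite iterated Stallings plumbing $S = S_1 * S_2 * \cdots * S_m$ of primitive flat surfaces. First I would observe that a \emph{trivial} primitive flat surface --- one isotopic to a disc, arising from a crossingless diagram consisting of a single Seifert circle --- has a bandless spine, so gluing it onto the rest along the prescribed disc produces a surface isotopic to the one we started with; in particular the boundary link and the first Betti number are unchanged (consistently with the additivity $\beta(S_1 * S_2) = \beta(S_1) + \beta(S_2)$ recorded in Section \ref{defalternpseudo}, a trivial summand contributing $0$). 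Deleting all such summands, I may assume $S = S_1 * \cdots * S_n$ with every $S_i$ non-trivial.

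If $n = 0$ then $S$ is a disc, so $L$ is the unknot and hence alternative (a crossingless diagram is vacuously alternative). If $n \geq 1$, then Lemma \ref{cotasupprim} applies and gives $n \leq \beta(S) = \beta(L) \leq 2$ (using Proposition \ref{propaux} to identify $\beta(S)$ with $\beta(L)$). Therefore $S$ is either a single primitive flat surface, or the Stallings plumbing of exactly two primitive flat surfaces. In both cases Lemma \ref{lema} applies directly and concludes that $L$ is alternative.

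The main thing to be careful about is the normalization step: one must check that erasing a disc-like summand from a Murasugi sum is harmless, i.e.\ leaves the surface (and hence its boundary and genus) unchanged up to isotopy. This is essentially built into the definition of the plumbing $*$ given in Section \ref{defalternpseudo} --- bands at the two identified discs may not overlap, so a piece with no bands is glued on trivially --- but it is worth spelling out, since otherwise the bound ``$n \leq \beta(L)$'' coming from Lemma \ref{cotasupprim} could not legitimately be applied. I would also note that nothing in the argument uses that $L$ is a knot, so the corollary indeed covers links of any number of components with $\beta(L) \leq 2$; the genus-one knot case is simply the special case $\beta(L) \leq 2$ with a single boundary component.
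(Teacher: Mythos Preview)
Your proof is correct and follows essentially the same route as the paper, which states this corollary without proof as an immediate consequence of Lemmas \ref{cotasupprim} and \ref{lema}. Your explicit handling of trivial summands and of the degenerate case $n=0$ is more careful than what the paper spells out, but it is precisely the argument the paper is relying on implicitly.
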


\begin{figure}[t]
\centering
\includegraphics[width = 15.5cm]{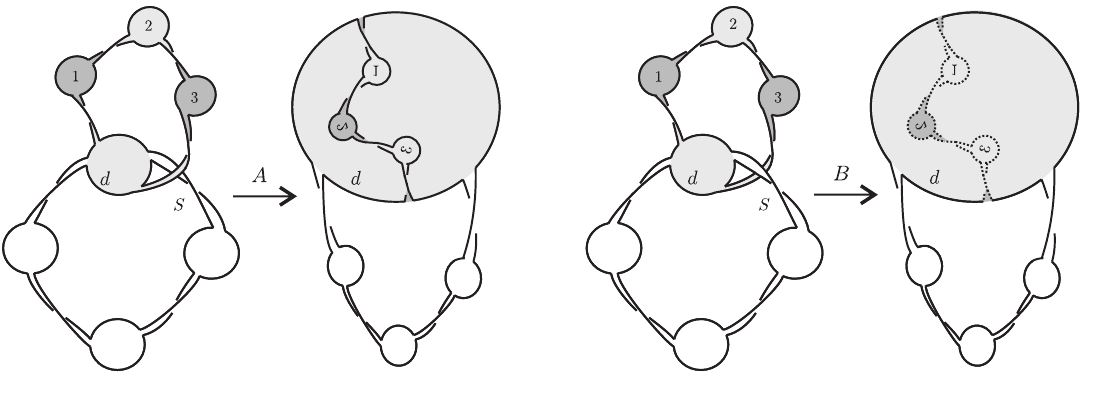}
\caption{\small{$S = S_1 * S_2$, with $S_1$ and $S_2$ two primitive flat surfaces plumbed by using $d$ as gluing disc; $S_2$ has been colored. In $A$ ($B$), the surface $S_2$ has been overturned over (under) the disc $d$.}}
\label{volcados}
\end{figure}

The following Corollaries are particular cases of Corollary \ref{beta2}:

\begin{Cor}\label{kauffmanverdadG0}
Every pseudoalternating genus zero link with three or less components is alternative.
\end{Cor}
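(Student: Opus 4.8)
The plan is to derive this Corollary as a direct consequence of Corollary \ref{beta2}, by showing that a pseudoalternating genus zero link with at most three components necessarily has $\beta(L) \le 2$. Recall that for a connected Seifert surface $F$ spanning a link $L$ with $\mu$ components, the relation between genus and first Betti number is $\beta(F) = 2g(F) + \mu - 1$, since $\chi(F) = 2 - 2g(F) - \mu$ and $\beta(F) = 1 - \chi(F)$. By Proposition \ref{propaux}, a generalized flat surface spanning $L$ realizes both the minimal genus $g(L)$ and the first Betti number $\beta(L)$, so $\beta(L) = 2g(L) + \mu - 1$.

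First I would invoke the hypothesis that $L$ has genus zero, so $g(L) = 0$, giving $\beta(L) = \mu - 1$. Then, since $L$ has three or fewer components, $\mu \le 3$, hence $\beta(L) = \mu - 1 \le 2$. At this point Corollary \ref{beta2} applies verbatim and tells us that $L$ is alternative, which is exactly the claim.

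The only subtlety worth spelling out is the case analysis at the boundary: a genus zero link with one component is the unknot (trivial), with two components has $\beta(L) = 1$, and with three components has $\beta(L) = 2$; in all cases $\beta(L) \le 2$ as required, and since we have assumed throughout that links are non-split and oriented, the generalized flat surface in question is connected, so the Betti-number formula above is legitimate. No separate argument is needed for the split-link situation.

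I do not anticipate a genuine obstacle here, as the statement is essentially a translation of the genus hypothesis and the component bound into a bound on $\beta$. The one place to be careful is simply making sure that the equality $\beta(L) = 2g(L) + \mu - 1$ is being applied to a \emph{minimal genus} connected Seifert surface — which is exactly what Proposition \ref{propaux} guarantees for a generalized flat surface — rather than to an arbitrary spanning surface, for which only an inequality would hold.
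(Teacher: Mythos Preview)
Your proof is correct and is essentially the approach the paper takes: the paper explicitly states that this corollary is a particular case of Corollary~\ref{beta2}, and your argument computes $\beta(L)=2g(L)+\mu-1\le 2$ and invokes that corollary. The only cosmetic difference is that the paper's written proof unpacks Corollary~\ref{beta2} into its ingredients (Lemma~\ref{cotasupprim} to bound the number of primitive flat surfaces by $\mu-1$, then Lemma~\ref{lema} for the cases $\mu=2,3$, and the unknot for $\mu=1$), whereas you cite Corollary~\ref{beta2} directly.
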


\begin{proof}
Let $L$ be a pseudoalternating genus zero link with $\mu$ components, and let $S$ be a generalized flat surface whose boundary is $L$. As $\beta(L) = 2g(L) + \mu - 1$, by Lemma \ref{cotasupprim} the surface $S$ is plumbing of at most $\mu - 1$ non-trivial primitive flat surfaces. If $\mu = 1$, $L$ is the trivial knot, which bounds a disc, hence it is alternative. If $\mu$ is $2$ or $3$, the result holds by Lemma \ref{lema}.
\end{proof}

We claim that pseudoalternating genus zero links with four or five components are also alternative (notice that their first Betti numbers are 3 and 4, respectively). In order to show this, one must check all different possible ways of gluing primitive flat surfaces to obtain such links, and take into account that, when plumbing a surface $S$ with $\beta(S) = 1$ to a pseudoalternating surface $S'$, the number of boundary components increases (decreases) by one when both bands coming from the gluing disc are attached to the same (different) boundary component of $S'$. This case by case procedure is straightforward but lengthy, so we prefer to omit it.

\vspace{0.2cm}

Another consequence of Lemmas \ref{cotasupprim} and \ref{lema} is the following corollary:

\begin{Cor}\label{kauffmanverdadK1}
Every pseudoalternating genus one knot is alternative.
\end{Cor}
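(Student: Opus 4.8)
The plan is to deduce Corollary \ref{kauffmanverdadK1} directly from Lemmas \ref{cotasupprim} and \ref{lema}, essentially repeating the argument of Corollary \ref{beta2} specialized to the genus one knot case. First I would recall that for a knot $K$ one has $\mu = 1$ component, so the standard identity $\beta(K) = 2g(K) + \mu - 1$ gives $\beta(K) = 2g(K)$. Hence a genus one knot has first Betti number exactly $2$.

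Next, let $S$ be a generalized flat surface spanning the pseudoalternating knot $K$, which exists by definition of pseudoalternating; write $S = S_1 * S_2 * \ldots * S_n$ as a plumbing of non-trivial primitive flat surfaces (discarding any trivial disc factors, which do not affect the boundary). By Lemma \ref{cotasupprim}, $n \leq \beta(K) = 2$. So there are only two cases: either $n = 1$, in which case $S$ is itself a primitive flat surface, or $n = 2$, in which case $S$ is the Stallings plumbing of two primitive flat surfaces. In both cases Lemma \ref{lema} applies directly and yields that $K$ is alternative. (The degenerate possibility $n = 0$, i.e. $S$ is a disc, would force $K$ to be the unknot, which is alternative.)

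There is essentially no obstacle here: the corollary is a clean specialization of Corollary \ref{beta2}, since $\beta(K) = 2g(K) \leq 2$ for genus one knots. The only point worth stating carefully is the reduction to non-trivial primitive factors so that the hypothesis of Lemma \ref{cotasupprim} is met, and the observation that $\beta = 2g$ for knots. I would phrase the proof in two or three sentences accordingly, citing $\beta(K) = 2g(K)$, invoking Corollary \ref{beta2}, and noting that a genus one knot therefore satisfies $\beta(K) = 2 \leq 2$.

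\begin{proof}
Let $K$ be a pseudoalternating knot of genus one. Since $K$ has a single component, $\beta(K) = 2g(K) + 1 - 1 = 2g(K) = 2$. By Corollary \ref{beta2}, every pseudoalternating link with first Betti number at most $2$ is alternative; in particular $K$ is alternative.
\end{proof}
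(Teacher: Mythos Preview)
Your proposal is correct and follows essentially the same argument as the paper: compute $\beta(K)=2g(K)+\mu-1=2$ and then apply the bound on the number of primitive flat pieces together with Lemma~\ref{lema}. The only cosmetic difference is that you package the last step by citing Corollary~\ref{beta2}, whereas the paper invokes Lemmas~\ref{cotasupprim} and~\ref{lema} directly; the content is identical.
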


\begin{proof}
Let $K$ be a pseudoalternating genus one knot, and let $S$ be a generalized flat surface whose boundary is $K$. By Lemma \ref{cotasupprim}, as $\beta(K) = 2 g(K) + \mu - 1  = 2$, $S$ is plumbing of at most two non-trivial primitive surfaces. Lemma \ref{lema} completes the proof.
\end{proof}

We have shown that a knot of genus one is pseudoalternating if and only if it is homogeneous. As a consequence, we obtain an alternative proof of the following result:

\begin{Teo} {\rm{\cite{Pedro}}}
A genus one knot is homogeneous if and only if it belongs to one of the two following classes of knots: \\
1.- Pretzel knots with diagram $P(a,b,c)$, where $a,b,c$ are odd integers with the same sign. \\
2.- Pretzel knots with diagram $P(m, \, e, \, \stackrel{k}{\ldots} \, , \, e)$, where m and k are non-zero even integers and $e = \pm 1$.
\end{Teo}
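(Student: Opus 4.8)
The plan is to reduce the statement to a classification of the generalized flat surfaces of first Betti number two that bound a knot, using Corollary~\ref{kauffmanverdadK1} and the machinery of Section~\ref{defalternpseudo}. Since a homogeneous link is pseudoalternating and, by Corollary~\ref{kauffmanverdadK1}, a pseudoalternating genus one knot is alternative, hence homogeneous, a genus one knot $K$ is homogeneous if and only if it is pseudoalternating, that is, if and only if it bounds a generalized flat surface $S$; by Proposition~\ref{propaux} every such $S$ satisfies $\beta(S)=\beta(K)=2$. By Lemma~\ref{cotasupprim}, $S$ is then either a single non-trivial primitive flat surface with $\beta(S)=2$, or a Stallings plumbing $S=S_1*S_2$ of two non-trivial primitive flat surfaces with $\beta(S_1)=\beta(S_2)=1$. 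Thus the theorem will follow once we (a) determine, in each of these two cases, which knots arise as $\partial S$, and (b) verify that every knot in the two listed classes does bound a surface of one of these forms.

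For (b) I would argue directly on diagrams. If $K=P(a,b,c)$ with $a,b,c$ odd of the same sign, I would take the orientable surface obtained by joining two discs with three bands all twisted the same way; the associated diagram is positive (or negative) and, under Seifert's algorithm, each twist region produces only nested circles of the ``single enclosing disc'' type recognised in Section~\ref{defalternpseudo}, so this surface is a primitive flat surface and $K$ is pseudoalternating, hence homogeneous. If $K=P(m,e,\stackrel{k}{\ldots},e)$ with $m,k$ non-zero even and $e=\pm1$, I would instead exhibit an explicit generalized flat surface spanning $K$, built by plumbing the primitive flat surface carrying the even ``clasp'' region with the primitive flat pieces carrying the $k$ regions of $\pm 1$; again $K$ becomes pseudoalternating, hence homogeneous.

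For (a) the first step is to identify the relevant small primitive flat surfaces. A non-trivial primitive flat surface of first Betti number one has Euler characteristic zero, hence is an annulus, and one checks that up to isotopy it is the positive or the negative Hopf band. Consequently, in the plumbing case $S=S_1*S_2$ is a plumbing of two Hopf bands, and running through the finitely many inequivalent choices of the two gluing discs and of the identification (subject to the rule that bands meeting the gluing disc may not overlap) shows that $\partial S$ is the trefoil or the figure eight knot, both of which belong to the two listed classes. In the remaining case $S$ is a single primitive flat surface with connected boundary and $\beta(S)=2$; its Seifert graph may be assumed $2$-connected, since otherwise $S$ splits as a non-trivial plumbing and we are back in the previous case. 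Hence, after deleting the crossings corresponding to pendant edges (Reidemeister~I moves), the Seifert graph is a $2$-connected signed graph of cycle rank two with all edges of one sign, i.e.\ a subdivision of the theta graph; going through the diagrams whose Seifert graph has this shape and reading off $\partial S$ as a pretzel knot yields exactly the knots of the two listed classes. Combined with (b), this gives the characterization.

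The hard part is the last paragraph, namely the two enumerations in step (a). Showing that no further diagram shapes occur requires care with the normalization of nested Seifert circles (the ``single enclosing disc'' convention, which is precisely what keeps the pieces primitive) and with the fact that different diagrams may share the same Seifert graph; and the bookkeeping that matches the resulting boundary knots to the exact parametrizations $P(a,b,c)$ and $P(m,e,\stackrel{k}{\ldots},e)$, though elementary, is lengthy --- very much in the spirit of the case-by-case verification we chose to omit for genus zero links with four or five components after Corollary~\ref{kauffmanverdadG0}. Everything else is immediate from Corollary~\ref{kauffmanverdadK1} and Lemmas~\ref{cotasupprim} and~\ref{lema}.
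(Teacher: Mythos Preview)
Your overall strategy---reducing to the dichotomy ``$S$ primitive with $\beta(S)=2$'' versus ``$S=S_1*S_2$ with $\beta(S_i)=1$'' via Corollary~\ref{kauffmanverdadK1} and Lemma~\ref{cotasupprim}---matches the paper exactly. The problem is in how you resolve the second branch.

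You assert that a non-trivial primitive flat surface with $\beta=1$ ``is an annulus, and one checks that up to isotopy it is the positive or the negative Hopf band''. Topologically it is indeed an annulus, but as an embedded Seifert surface it is \emph{not} restricted to the Hopf band. A primitive flat surface with $\beta=1$ is what the paper calls a \emph{twisted Hopf-band}: a cycle of $b$ discs and $b$ same-sign bands, for any even $b\geq 2$; its Seifert graph is a $b$-cycle, and these surfaces are pairwise non-isotopic in $S^3$ (their boundaries are distinct $(2,\pm b)$ torus links). Consequently your claim that the plumbing case yields only the trefoil or the figure eight is wrong: when $S_1$ has $b_1$ bands of sign $\varepsilon_1$ and $S_2$ has $b_2$ bands of sign $\varepsilon_2$, and the bands at the gluing disc are forced to alternate so that $\partial S$ is a knot, one obtains precisely the pretzel knot $P(\varepsilon_1 b_1,\varepsilon_2,\stackrel{b_2}{\ldots},\varepsilon_2)$. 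This is exactly the \emph{second} family in the statement, with $m=\varepsilon_1 b_1$, $k=b_2$, $e=\varepsilon_2$.

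Correspondingly, your handling of the primitive $\beta=2$ case is also off: that case gives only the \emph{first} family. The Seifert graph is (after pruning trees) a subdivided theta graph, so $S$ consists of two discs joined by three chains of bands and discs with $a',b',c'$ bands; the knot condition forces $a',b',c'$ odd, and the primitivity forces a common sign $\varepsilon$, yielding $K=P(\varepsilon a',\varepsilon b',\varepsilon c')$. It does not produce knots of the second family. Once you repair the $\beta=1$ classification, the two branches line up cleanly with the two listed classes and no lengthy enumeration is needed.
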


\begin{proof}
As before, $K = \partial S$, where $S$ is a primitive flat surface with $\beta(S) = 2$ or $S = S_1 * S_2$ is the plumbing of two primitive flat surfaces having $\beta(S_1) = \beta(S_2) = 1$.

\vspace{0.2cm}

In the first case, $K$ is a positive or negative knot. As $S$ is connected and $\beta(S) = 2$, $S$ must be as shown in Figure \ref{pretzelpares} (left). Write $A, B, C$ for each of the three subsurfaces consisting on ``a linear path of bands and discs'' in $S$ ($A, B$ and $C$ in cyclic order when traveling through the two common discs $d_\delta$ and $d_\gamma$). As $K$ is a knot, each of these paths must start and end in discs with different orientations (otherwise the link bounding the surface would be a 3-components link). Hence, their respective numbers of bands, $a', b', c'$, are odd. Let $\varepsilon$ be their common sign. $K$ is the Pretzel knot $P(\varepsilon \cdot a', \, \varepsilon \cdot b',  \, \varepsilon \cdot c')$, as can be seen in Figure \ref{pretzelpares}.

\vspace{0.2cm}

Now assume that $S = S_1 * S_2$. Note that after the plumbing, the pair of bands attached to $d_2$ in $S_2$ must alternate with the two bands attached to $d_1$ in $S_1$ in the gluing disc $d = d_1 = d_2$; otherwise, the resulting surface would span a 3-components link. Let $b_i$ be the number of bands in $S_i$ and $\varepsilon_i$ their signs, \, $i = 1,2$. $\beta(S_1) = \beta(S_2) = 1$, so both $S_1$ and $S_2$ are twisted Hopf-bands, that is, each of them consists on $k$ discs and $k$ bands joined forming a circle. Since they are oriented, $b_i$ is even and it follows (see an example in Figure \ref{pretzelimpares}) that $K$ is the Pretzel knot $P(\varepsilon_1 \cdot b_1, \, \varepsilon_2, \, \stackrel{b_2}{\ldots} \, , \, \varepsilon_2)$. Write $m = \varepsilon_1 \cdot b_1$, \, $k = b_2$ and $e = \varepsilon_2$.
\end{proof}

\begin{figure}[h]
\centering
\includegraphics[width = 12.5cm]{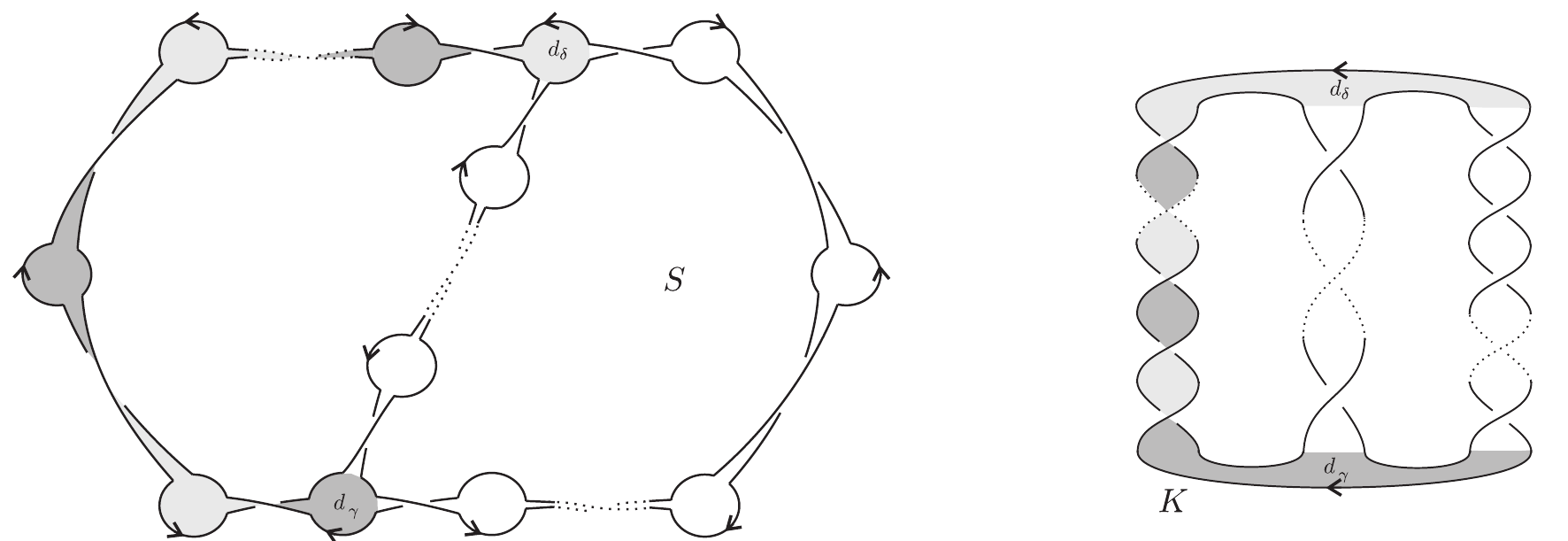}
\caption{\small{$S$ is a primitive flat surface; an even number of discs and bands can be attached in the place of the dotted lines (or removed). One of the three subsurfaces, say $A$, has been colored. If one thinks on the example in the picture as if all lines were non-dotted, then $a' = 5$, $b' = 3$, $c' = 5$ and $\varepsilon = -$.}}
\label{pretzelpares}
\end{figure}

\begin{figure}[h]
\centering
\includegraphics[width = 15.5cm]{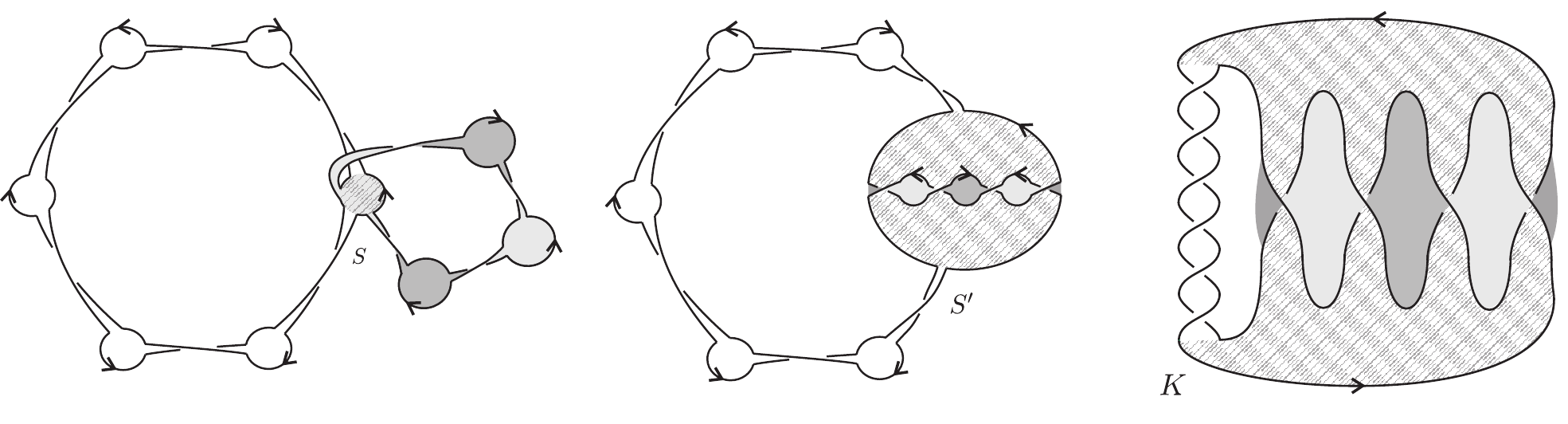}
\caption{\small{$S= S_1 * S_2$ is a generalized flat surface; $S_2$ has been colored. After overturning $S_2$ over the dotted disc, it is clear that $K$ is the Pretzel knot $P = (-6, 1, 1, 1, 1)$}}
\label{pretzelimpares}
\end{figure}

\bibliographystyle{plain}
\bibliography{Bibliograf}

\end{document}